\newtheorem{thm}{Theorem}
\newtheorem{prop}{Proposition}
\newtheorem{rem}{Remark}
\newtheorem{lem}{Lemma}
\newcommand{\Id}[1]{\color{blue} \color{black}}
\newcommand{\inlineitem}[1][]{%
	\ifnum\enit@type=\tw@
	{\descriptionlabel{#1}}
	\hspace{\labelsep}%
	\else
	\ifnum\enit@type=\z@
	\refstepcounter{\@listctr}\fi
	\quad\@itemlabel\hspace{\labelsep}%
	\fi}
\title{\LARGE \bf
Essentially decentralized conjugate gradients
}
		\author{Alexander Engelmann and Timm Faulwasser}
	\thanks{AE and TF are both with the Institute for Energy Systems, Energy Efficiency and Energy Economics, TU Dortmund University, Germany.
	Parts of this work have been conducted while AE was with the Institute for Automation and applied Informatics (IAI), Karlsruhe Institute of Technology, Karlsruhe, Germany in the PhD thesis \cite{Engelmann2020e}.
		{\tt\small alexander.engelmann@tu-dortmund.de, timm.faulwasser@ieee.org}%
}
\begin{document}

\maketitle
\thispagestyle{empty}
\pagestyle{empty}

\begin{abstract}
Solving structured systems of linear equations in a non-centralized fashion is an important step in many distributed optimization and control algorithms.
	Fast convergence is required in manifold applications.
	Known decentralized algorithms, however, typically exhibit asymptotic convergence  at a linear rate.
	This note proposes an essentially decentralized variant of the Conjugate Gradient algorithm (d-CG).  The proposed method exhibits a practical superlinear  convergence rate and comes with a priori computable finite-step convergence guarantees. 
	In contrast to previous works, we consider sum-wise decomposition instead of row-wise decomposition which enables application in multi-agent settings.
	We illustrate the performance of d-CG on problems from sensor fusion and compare the results to  the widely-used Alternating Direction Method of Multipliers. 	
\end{abstract}

\section{Introduction}
Sparse and structured linear equations  occur frequently as subproblems in different contexts of optimization and control algorithms. 
In many of these algorithms, the coefficient matrix is a sum of individual coefficient matrices, each of which is assigned to one subsystem and carries a sparsity pattern related to the underlying graph structure.
Examples are sensor networks~\cite{Xiao2005,Schizas2008}
{and structure exploiting distributed or parallel optimization algorithms based on the Schur complement \cite{Zavala2008,Kardos2020c,Engelmann2020c}.}

	If the coefficient matrix is positive semi-definite, solving the linear equations can equivalently be formulated as a separable convex optimization problems rendering  standard decentralized  algorithms such as the Alternating Direction Method of Multipliers~(ADMM)~ \cite{Shi2014,Makhdoumi2017} or Jacobi iterations based on Schwarz decomposition~\cite{Shin2020}  applicable.
{Also decentralized subgradient schemes such as EXTRA, NEXT, DIGing or PANDA~  \cite{Shi2015,DiLorenzo2016,Nedic2017,Maros2018} are applicable, which converge even under asynchrony or under  time-varying communication graphs.}
{However, none of these algorithms  exhibits exact convergence guarantees in an a priori computable number of iterations.}

{This paper addresses this gap and presents an essentially decentralized Conjugate Gradient algorithm (d-CG) which solves systems of linear equations, where the coefficient matrix is a sum over individual subsystems.
	The proposed algorithms avoids  \textit{central computation} entirely, but  requires a rather small amount of \textit{global communication}.
	While distributed optimization typically allows for  a preferably small central computation and communication \cite{Bertsekas1989}, decentralized optimization allows neither for central communication nor for central computation~\cite{Nedic2018}.
	Hence, we refer to d-CG as being \textit{essentially decentralized}---a sub-category in between distributed and decentralized optimization.
}
We show that d-CG converges to the {exact} solution in  a finite, a priori computable number of iterations. Moreover, we demonstrate practical superlinear convergence, which is one of the fastest convergence rates currently known for distributed and decentralized algorithms.

{In contrast to our approach, classical parallel conjugate gradient methods such as  \cite{RadicatiDiBrozolo1989,Helfenstein2012,Lof2004} decompose the coefficient matrix {row-wise}, i.e., each subsystem corresponds to a specific number of rows in the coefficient matrix. 
	This renders its application to multi-agent systems rather difficult, e.g. in context of optimization algorithms based on Schur complements with a given decomposition structure.\footnote{{Other row-wise decomposition approaches are proposed in \cite{Mou2015,Shi2017}, which are guaranteed to converge even under asynchrony and communication delays. These approaches do not rely on CG.}}}
The idea of distributed conjugate gradient algorithms has been previously considered in the context of bi-level distributed algorithms for non-convex problems \cite{Engelmann2020c}. 
In contrast, the present paper focuses on sparsity exploitation and it avoids  pre-computation steps.

The remainder of this paper is structured as follows:
In \autoref{sec:probForm}, we introduce a sparsity encoding framework and briefly recall the centralized conjugate gradient algorithm.
We continue by developing the essentially decentralized conjugate gradient method in \autoref{sec:d-CG} based on the sparsity framework from \autoref{sec:probForm}.
\autoref{sec:App} demonstrates the effectiveness of our approach via numerical examples and in comparison to ADMM.

\textit{Notation:} A weighted semi-norm is written as  $\|x\|_{W}\doteq\sqrt{x^\top W x}$ with $W\succeq 0$.
For  a sequence of matrices~$\{W_i\}_{i \in \mathcal{S}}$,
$\operatorname{blkdiag}\,\{W_i\}_{i \in \mathcal{S}}$ denotes block-diagonal concatenation and $\operatorname{vec}\,\{W_i\}_{i \in \mathcal{S}}$ refers to vertical concatenation.
{We denote the $l$th row of a matrix $M\in \mathbb{R}^{n\times m }$ by $[M]_l$, and $[M]_{kl}$ is the $k-l$ entry of $M$.}

\section{Preliminaries and Problem Formulation} \label{sec:probForm}
Consider a set of subsystems/agents $\mathcal{S} \doteq \{1,\dots,|\mathcal S|\}$, which aims at cooperatively solving
\begin{align} \label{eq:SchurComp}
	\underbrace{\left  (\sum_{i \in \mathcal{S}} S_i \right  )}_{\doteq S} \lambda =\underbrace{\sum_{i \in \mathcal{S}} s_i. }_{\doteq s}
\end{align}
All coefficient matrices $S_i \in \mathbb{R}^{m\times m}, i \in \mathcal{S}, $ are symmetric positive \emph{semi-definite}, and we assume that each $S_i=S_i^\top$ carries a  sparsity structure, i.e., specific  rows and columns of $S_i$ are zero.
We collect all indices for which $S_i$ has non-zero rows in the set 
\begin{align} \label{eq:defCi}
	\mathcal{C}_i\doteq\{l\in \{1,\dots,m\} \; | \; [S_i]_{l} \neq 0\}.
\end{align}
{Due to $S_i=S_i^\top$, the set $\mathcal{C}_i$ also encodes non-zero columns. The set of neighbours of system $i$  is
	$
	\mathcal{N}_i\doteq \{j\in \mathcal{S} \; |\;  \mathcal{C}_i \cap \mathcal{C}_j \neq \emptyset\}.
	$}
{Hence, \eqref{eq:SchurComp} is implicitly defined over an undirected communication graph $G=(\mathcal{S},\mathcal{E})$
	with nodes set $\mathcal{S}$ and edge set $\mathcal{E} \doteq  \{(i,j)\;|\; \mathcal{C}_i \cap \mathcal{C}_j \neq \emptyset \}$.}

\subsection*{The Centralized Conjugate Gradient Method} 
{Centralized CG for solving \eqref{eq:SchurComp} is summarized in Algorithm~\ref{alg:CG}.
	It  dates back to \cite{Hestenes1952} and is discussed in many standard sources such as  \cite[Ch. 5.1]{Nocedal2006}.
}
\begin{algorithm}[t]
	\small 
	\caption{Conjugate Gradients for problem \eqref{eq:SchurComp} }
	\textbf{Initialization: $r^0=p^0=s -S  \lambda^0$.}\\
	\textbf{Repeat until: $r^n= 0$}
	\begin{subequations} \label{eq:CGiter}
		\begin{align}
			\alpha^n &= \frac{r^{n\top}r^n}{p^{n\top}Sp^n} \label{eq:CGalpha}\\
			\lambda^{n+1} &=  \lambda^n + \alpha^n p^n \label{eq:CGlam}\\
			r^{n+1} &= r^n-\alpha^nSp^n\label{eq:CGr}\\
			\beta^n &= \frac{r^{n+1 \top} r^{n+1}}{r^{n\top}r^n} \label{eq:CGbeta} \\
			p^{n+1} &= r^{n+1} + \beta^n p^n \label{eq:CGp} \\
			n & \leftarrow n+1 \notag 
		\end{align}
	\end{subequations}
	\vspace{-.4cm}
	\label{alg:CG}
\end{algorithm} 
We recall a convergence result  {from \cite{Hayami2020}} for systems with positive semi-definite matrices $S$ and $s\in \operatorname{range}(S)$, which extends the standard convergence analysis of CG.
{Note that if $s\notin \operatorname{range}(S)$, problem \eqref{eq:SchurComp} does not have solution.}
Semi-definite $S$ arise frequently in network settings, as redundant constraints induced by the communication graph lead to zero eigenvalues of $S$, cf. \autoref{sec:distEst}.
\begin{thm}[Convergence of CG {\cite{Hayami2020}}] \label{thm:cgConv}
	{Consider problem \eqref{eq:SchurComp}.}
	If $S\succeq0$, $\operatorname{range}(S)\neq \{0\}$ and $s\in \operatorname{range}(S)$, Algorithm~\ref{alg:CG} converges to a solution of \eqref{eq:SchurComp}, $ \lambda^\star$, and the convergence-rate estimate
	\begin{align}\label{eq:est2}
		\| \lambda^{n+1} -  \lambda ^\star\|_{S}^2
		\leq 
		2\left ( \frac{ \sqrt{\kappa} - 1}{\sqrt{\kappa} + 1}\right)^n
		\| \lambda^{0} -  \lambda ^\star\|_{S}^2
	\end{align}
	holds, where $\kappa = \omega_{\mathrm{max}}/ \omega_{\mathrm{min}}$ is the ratio of the largest and smallest non-zero eigenvalues of $S$.
	Moreover, CG terminates after at most $m-n_0$ steps, where $n_0$ is the number of zero-eigenvalues of $S$. \hfill $\square$
\end{thm}
{	Despite $S\succ 0$, the convergence estimates are useful since $ \lambda^\star \not\in\operatorname{null}(S)$. Hence the bound characterizes convergence in $\operatorname{range}(S)$. For the sake of completeness, we present an alternate proof to one from \cite{Hayami2020} in the Appendix.}

{\begin{rem}[CG convergence bounds for semi-definite $S$]\label{rem:semDefS}
		First convergence rate bounds for CG and positive definite $S$ appear in \cite{VanDerSluis1986}.
		The analysis therein can immediately be extended to the semi-definite case if $s \in \operatorname{range}(S)$, which is recognized in \cite{Axelsson1994,Kaasschieter1988}.
		For an explicitly computable bound in terms of  min/max (non-zero) eigenvalues using a simplified analysis see \cite{Hayami2020}.	\hfill $\square$
\end{rem}}

\section{Decentralized Conjugate Gradients} \label{sec:d-CG}
The main contribution of this note is a essentially decentralized conjugate gradient algorithm which is presented next.

\subsection*{Sparsity Exploitation}
{We consider projection matrices  $I_i \in \mathbb{R}^{|\mathcal{C}_i|\times m}$ to eliminate zero-rows and zero-columns in $S_i$.
	This enables fast convergence and it simplifies message-passing between subsystems.}
We construct these projections as the vertical concatenation of standard Euclidean basis  vectors 
$e_i^\top\doteq (0,\;\dots,1
,\dots,0) \in \mathbb{R}^{m}$
indexed by $\mathcal{C}_i$ from \eqref{eq:defCi}, i.e.,
\begin{align} \label{eq:Idef}
	{I_i\doteq \operatorname{vec}\,\{e_j^\top\}_{ j \in \mathcal{C}_i}.}
\end{align}
Moreover, we consider diagonal matrices
\begin{align} \label{eq:LamDef}
	\Lambda_i\doteq I_i \left ( \textstyle \sum_{j \in \mathcal{S}} I_j^{ \top} I_j^{\phantom \top} \right ) I_i^{ \top}
	= I_i^{\phantom \top} \Lambda I_i^{ \top},
\end{align}
with $ \Lambda \doteq \sum_{j \in \mathcal{S}} I_j^{ \top} I_j^{\phantom \top}$.
Next we formalize properties of $\Lambda_i$ and $ S_i$ with respect to multiplication with $I_i$.
{To this end, $E_i \in \mathbb{R}^{|\mathcal{C}_i| \times |\mathcal{C}_i| }$ is defined 
	as a diagonal matrix with $[E_i]_{jj} = 1$ if $j \in \mathcal{C}_i$ and $[E_i]_{jj} = 0$ if $j \not\in \mathcal{C}_i$.}
\begin{lem}[Properties of $S_i$, $ I_i$ and $\Lambda_i$] \label{lem:propS}
	Consider $S_i$ from \eqref{eq:SchurComp},  $ I_i$ from \eqref{eq:Idef}, and $\Lambda_i$  from \eqref{eq:LamDef} with  $\mathcal C_i$ from \eqref{eq:defCi}.
	Then, the following properties hold:
	\begin{align*}
		&\text{(P1)}\; I_i^{\top} I_i^{\phantom \top}=E_{i}, 
		&&\text{(P2)}\; I_i^{\top} I_i^{\phantom \top}  S_i =  S_i , \\
		&\text{(P3)}\; I_i^{\top} I_i^{\phantom \top}  s_i = s_i, 
		&&\text {(P4)}\; I_i^{\top} I_i^{\phantom \top}I_i^{\top}=I_i^{\top}, \\
		&\text{(P5)}\; \sum_{i \in \mathcal{S}}I_i^{\top}\Lambda_i^{-1}I_i^{\phantom \top}=I. &&& \hspace*{-1.05cm} \square
	\end{align*}			
\end{lem}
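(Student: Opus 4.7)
The plan is to verify each of the five identities by direct computation from the definitions, leveraging the fact that $I_{\mathcal{C}(i)}$ is nothing more than a stack of standard basis row vectors indexed by $\mathcal{C}(i)$.

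First, I would establish (P1) by writing $I_{\mathcal{C}(i)}^\top I_{\mathcal{C}(i)} = \sum_{j \in \mathcal{C}(i)} e_j e_j^\top$, which is the diagonal matrix carrying ones exactly at the positions in $\mathcal{C}(i)$ and thus matches the definition of $E_{\mathcal{C}(i)}$. With (P1) in hand, properties (P2) and (P3) become almost tautological: left-multiplying $S_i$ (respectively $s_i$) by the coordinate projector $E_{\mathcal{C}(i)}$ preserves the rows indexed by $\mathcal{C}(i)$ and zeros out the rest, and by the very definition of $\mathcal{C}(i)$---extended to $s_i$ via the tacit support assumption inherited from the network setup of \eqref{eq:SchurComp}---the discarded rows were already zero. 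Property (P4) likewise follows from (P1) by noting that the columns of $I_{\mathcal{C}(i)}^\top$ are precisely the basis vectors $\{e_j : j \in \mathcal{C}(i)\}$, each of which is fixed under multiplication by $E_{\mathcal{C}(i)}$.

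The only identity requiring real work, and the step I expect to be the main obstacle, is (P5). My plan there is to unfold the definition of $\Lambda$ to recognize it as a diagonal matrix whose $(k,k)$-entry equals $|\{i \in \mathcal{S} : k \in \mathcal{C}(i)\}|$, and then to identify $\Lambda_i$ as the principal diagonal submatrix of $\Lambda$ indexed by $\mathcal{C}(i)$. With this identification, $I_{\mathcal{C}(i)}^\top \Lambda_i^{-1} I_{\mathcal{C}(i)}$ is the $m \times m$ diagonal matrix carrying $1/\Lambda_{kk}$ at position $(k,k)$ for $k \in \mathcal{C}(i)$ and zero elsewhere. Summing over $i \in \mathcal{S}$, the $(k,k)$-entry receives exactly $\Lambda_{kk}$ contributions of magnitude $1/\Lambda_{kk}$ and therefore evaluates to one, while off-diagonal entries vanish throughout. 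The actual obstacle is purely notational: one must keep the index book-keeping between the local basis of $\mathbb{R}^{|\mathcal{C}(i)|}$ and the global basis of $\mathbb{R}^{m}$ unambiguous, which a small convention fixed at the outset should resolve cleanly.
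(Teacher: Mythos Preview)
Your proposal is correct and tracks the paper's proof almost verbatim for (P1)--(P4); the only noticeable variation is in (P5), where you argue coordinate-wise by counting that the $(k,k)$-entry accumulates $\Lambda_{kk}$ copies of $1/\Lambda_{kk}$, whereas the paper instead establishes the matrix identity $\Lambda_i^{-1}=I_{\mathcal C(i)}\Lambda^{-1}I_{\mathcal C(i)}^{\top}$ and then reduces $\sum_i I_{\mathcal C(i)}^{\top}\Lambda_i^{-1}I_{\mathcal C(i)}$ to $\Lambda\Lambda^{-1}$ via (P1)--(P4) and commutativity of diagonal matrices. Both routes are short and rely on the same tacit assumption that every global index lies in some $\mathcal C(i)$ (so that $\Lambda$ is invertible); your entry-wise argument is arguably the more transparent of the two.
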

\begin{proof}
	(P1): Follows immediately from $e_i^\top e^{\phantom \top}_j = \delta_{ij}$, where $\delta_{ij}$ is the Kronecker delta. (P2): By (P1) and the definition of $\mathcal{C}_i$, we have  $I_i^{\top} I_i^{\phantom \top}  S_i = E_{i} S_i  =  S_i$; (P3)  and (P4) follow similarly. 
	(P5): Since $\Lambda$ is diagonal, and by definition of $I_i^{ \phantom \top}$ as the concatenation of unit vectors, we have $ \Lambda^{-1}_i = (I_i^{\phantom \top}  \Lambda \,I_i^{ \top})^{-1}=I_i^{\phantom \top}  \Lambda^{-1} \,I_i^{ \top}$.
	Using (P1)-(P4) and $AB=BA$ for diagonal matrices yields 
	\begin{multline*}
		\sum_{i \in \mathcal{S}} I_i^{\top}  \Lambda^{-1}_i\, I_i^{ \phantom \top} = \sum_{i \in \mathcal{S}} I_i^{\top}  I_i^{\phantom \top}  \Lambda^{-1} \,I_i^{ \top}\, I_i^{ \phantom \top}  \\
		= \sum_{i \in \mathcal{S}} I_i^{\top}  I_i^{\phantom \top} I_i^{ \top}\, I_i^{ \phantom \top} \Lambda^{-1} 
		= \sum_{i \in \mathcal{S}} I_i^{\top}  I_i^{\phantom \top} \Lambda^{-1} 
		=\; \Lambda\, \Lambda^{-1}.
	\end{multline*}
\end{proof}

Next, we derive the d-CG algorithm based on \autoref{lem:propS}.
The main idea of d-CG is to exploit sparsity in \eqref{eq:CGalpha}-\eqref{eq:CGp} via the projections $I_i$.
We start by defining local equivalents for the CG iterates $\lambda, r$ and $p$,
\begin{subequations}\label{eq:CGdef}
	\begin{align} 
		\lambda_i \doteq I_i  \lambda, \;\;  r_i \doteq I_i r, \;\; p_i \doteq I_i p, \;\; \text{for all} \;\; i \in \mathcal{S}.
	\end{align}
	Moreover, let 
	\begin{equation}
		I_{ij}\doteq I_i^{\phantom \top} \hspace{-1.2mm} I_j^{\top} \hspace{-1.2mm},\;\; \hat S_i \doteq I_i^{\phantom \top} \hspace{-1.2mm}   S_i^{\phantom \top} \hspace{-1.2mm}   I_i^{\top}\hspace{-1.2mm},\;\; \hat {s}_i \doteq I_is_i,\; \text{for all} \; i \in \mathcal{S}.
	\end{equation}
\end{subequations}
With these definitions, the proposed  d-CG scheme is summarized in Algorithm~\ref{alg:d-CG}.
\begin{prop}[Convergence bounds for d-CG] \label{prop:convDCG}
	{Consider problem~\eqref{eq:SchurComp} under the assumptions of \autoref{thm:cgConv} and the variable definitions~\eqref{eq:CGdef}.
		Then, for any initialization $\lambda_i^0, i\in \mathcal{S}$, Algorithm~\ref{alg:d-CG} exhibits the convergence properties from \autoref{thm:cgConv}.} \hfill $\square$
\end{prop}
\begin{algorithm}[t]
	\small 
	\caption{Ess. decentralized CG  for problem \eqref{eq:SchurComp} }
	\textbf{Initialization for all $i \in \mathcal{S}$: $ \lambda^0_i$, $r_i^0=p_i^0= \sum_{j \in \mathcal{N}_i}I_{ij} \hat s_j - \sum_{j \in \mathcal{N}_i} I_{ij}^{\phantom \top} \hat S_j \lambda_j^0$,  $\eta^{0}_i= r_i^{0\top } \Lambda_i^{-1} r_i^0$ and $\eta^{0}= \sum_{i \in \mathcal{S}} \eta_i^{0}$.}\\
	\textbf{Repeat until $r_i^k= 0$ for all $i \in \mathcal{S}$:}
	\begin{subequations}
		\begin{align} 
			&\sigma^{n}_i = p^{n \top }_i \hat S_i p^{n}_i;\;\;  u_i^{n}= \hat S_i p_i^{n}; \hspace{-2.3cm} & \hfill \text{(local)} \label{step:dCG6} \\
			& \textstyle{  \sigma^{n} = \sum_{i \in \mathcal{S}} \sigma_i^{n} } \hspace{-.8cm} & \text{ (scalar global sums)} \label{step:dCG1}\\
			&\textstyle{ r^{n+1}_i = r_i^n - \frac{\eta^{n}}{\sigma^{n}} \sum_{j \in \mathcal  N_i} I_{ij} u_j^n} &\hfill \text{ (neighbor-to-neighbor)} \label{step:dCG2} \\
			&  \textstyle{\eta^{n+1}_i= r_i^{n + 1\top } \Lambda_i^{-1} r_i^{n+ 1} } & \text{(local)} \label{step:dCG3}\\
			& \textstyle{\eta^{n+1}= \sum_{i \in \mathcal{S}} \eta_i^{n+1} } \hspace{-1.2cm} &\hfill \text{ (scalar global sum)} \label{step:dCG4} \\
			&\textstyle{p_i^{n+1}= r_i^{n+1} + \frac{\eta^{n+1}}{\eta^n} p_i^n }; \;\; \textstyle{\lambda_i^{n+1} = \lambda_i^n + \frac{\eta^{n}}{\sigma^n} p_i} \hspace{-1.8cm} &\text{(local)} \label{step:dCG5} \\
			&n \leftarrow n+1 & \notag
		\end{align}
	\end{subequations} \label{alg:d-CG}
	\vspace{-.5cm}
\end{algorithm} 
\begin{proof}
	We give a constructive proof, i.e., we show that the iterates of Algorithm~\ref{alg:CG} and Algorithm~\ref{alg:d-CG} are equivalent. 
	First we decompose \eqref{eq:CGalpha}. 
	Consider $\alpha^n \doteq \frac{\eta^n}{\sigma^n}$ and $\eta^n \doteq r^{n\top}r^n$.
	From \autoref{lem:propS}, we  have that 
	\begin{align*}
		\eta^n = r^{n\top}r^n = r^{n\top}I\;r^n =
		r^{n\top}\Big (\sum_{i \in \mathcal{S}} I_i^{\top}  \Lambda^{-1}_i\, I_i^{ \phantom \top}\Big )\;r^n.
	\end{align*}
	Consider  $\eta_i\doteq r_i^{n\top}\Lambda_i^{-1} r^n_i $, then  we obtain via  \eqref{eq:CGdef}
	\begin{align*}
		\eta^n = r^{n\top}r^n = \sum_{i \in \mathcal{S}}r_i^{n\top}\Lambda_i^{-1} r^n_i = \sum_{i \in \mathcal{S}} \eta_i,
	\end{align*}
	where $\eta_i$ can be computed locally in each subsystem.
	This yields \eqref{step:dCG3} and \eqref{step:dCG4}. 
	By construction of $S, p_i$ and invoking \autoref{lem:propS}, the denominator of  \eqref{eq:CGalpha} can be written as 
	\begin{align*}
		\sigma^n
		&= p^{n\top}\sum_{i \in \mathcal{S}}S_ip^n
		= p^{n\top}\sum_{i \in \mathcal{S}}I_i^{\top} I_i^{ \phantom \top}S_iI_i^{\top} I_i^{ \phantom \top}p^n\\
		&= \hspace{-1mm}\sum_{i \in \mathcal{S}} p^{n\top}_i\hat S_ip^n_i
		=\hspace{-1mm}\sum_{i \in \mathcal{S}} \sigma_i^n,
	\end{align*}
	with  $\hat S_i$ from \eqref{eq:CGdef}. 
	This yields \eqref{step:dCG6} (first equation) and \eqref{step:dCG1}.
	Note that the sums $\sum_{i \in \mathcal{S}} \eta_i$ and $ \sum_{i \in \mathcal{S}} \sigma_i^{n}$ require \emph{global} communication of two scalars $(\eta_i,\sigma_i)$ per subsystem $i \in \mathcal{S}$.\footnote{This can be achieved via hopping protocols.}
	
	Next, consider \eqref{eq:CGlam} and \eqref{eq:CGp}.
	Left multiplying both equations by $I_i^{ \phantom \top}$  yields
	\begin{align*}
		\lambda^{n+1}_i =  \lambda^n_i + \textstyle \frac{\eta^n}{\sigma^n} p^n_i \quad \text{and} \quad
		p^{n+1}_i = r^{n+1}_i + \textstyle \frac{\eta^{n+1}}{\eta^{n}}  p^n_i
	\end{align*}
	comprising  \eqref{step:dCG5}  in Algorithm~\eqref{alg:d-CG}.
	Finally, we decompose \eqref{eq:CGr} which requires neighbor-to-neighbor communication due to $Sp^n$.
	\autoref{lem:propS}, and the definitions of $p_i$ and $\hat S_j$ yield
	\begin{align*}
		r^{n+1} &= r^n-\frac{\eta^n}{\sigma^n} Sp^n = 
		r^n-\frac{\eta^n}{\sigma^n} \sum_{j \in \mathcal{S}} I_j^{\top}  \hat S_j p^n_j.
	\end{align*}
	Left multiplying by $I_i^{\phantom \top}$ gives
	\begin{align*}
		r^{n+1}_i =  r^n_i-\frac{\eta^n}{\sigma^n}  \sum_{j \in \mathcal N_i} I_{ij}\hat S_j p^n_j= r^n_i-\frac{\eta^n}{\sigma^n} \sum_{j \in \mathcal  N_i} I_{ij}u_j^n,
	\end{align*}
	with  $u_j^n\doteq\hat S_j p^n_j$ and $I_{ij}$ from \eqref{eq:CGdef}. 
	Since $I_{ij} \hspace{-.8mm}=\hspace{-.6mm}0 $ if $j \neq \mathcal  N_i$, the summation is  over  the neighbors of subsystem~$i$.
	Moreover, $u_j^n=\hat S_j p^n_j$ can be computed locally by each subsystem.
	This  yields \eqref{step:dCG6} (second part) and \eqref{step:dCG2}.
	Multiply the initial condition in Algorithm~\ref{alg:CG}  by  $I_i$ and use (P2) to get
	\begin{align*} 
		r_i^0=p_i^0 
		= \sum_{j \in \mathcal{N}_i}I_{ij} \hat s_j - \sum_{j \in \mathcal{N}_i} I_{ij}^{\phantom \top} \hat S_j \lambda_j^0. 
	\end{align*}
	Hence the iterates of d-CG and CG are equivalent. 
\end{proof}
{We remark that if $\lambda_i^0 \not = 0$, one needs one extra neighbor-to-neighbor communication.}
{	\begin{rem}[Assumptions on the communication graph] \label{rem:commStruct}
		Recall that d-CG relies on the undirected communication graph $G$ induced by  \eqref{eq:defCi}. Notice that  connectedness of $G$ is not required as lack thereof implies block-diagonal structure in \eqref{eq:SchurComp}.
		Moreover, matching a pre-defined communication structure is possible via adjusting the problem formulation, cf. \autoref{sec:App}.\hfill $\square$
\end{rem}}

\subsection*{Discussion}
For the sake of comparison, the Appendix details a variant of decentralized ADMM (d-ADMM) based on the same sparsity model used above.
\autoref{tab::convPropCGAL} compares properties of d-ADMM and d-CG.
An advantage of d-ADMM is that it maintains only  two local variables, whereas d-CG  maintains four local variables.
{Moreover, d-ADMM is fully decentralized and does not require global communication.}
On the other hand, d-CG achieves  finite-step convergence to the {exact} solution.
Moreover, the practically observed convergence rate is {superlinear}, whereas d-ADMM or comparable algorithms achieve an at most linear or even {sublinear} rate \cite{Nedic2018}. 
{A further advantage of d-CG is that it does not require tuning while the realized convergence speed of ADMM  substantially depends on the chosen parameters.}

\begin{table}
	\centering
	\caption{Properties of d-CG and d-ADMM for \eqref{eq:SchurComp}.}
	\begin{tabular}{rlll}
		\toprule
		convergence rate	&	d-CG & d-ADMM  \\
		\midrule
		theoretical&  $m$-step & linear/sublinear   \\
		practical &  superlinear   & linear/sublinear \\
		tuning required & no & yes \\
		local variables & four & two \\
		\bottomrule
	\end{tabular}
	\label{tab::convPropCGAL}
\end{table}

\begin{rem}[Difference to d-ADMM and d-CG from \cite{Engelmann2020c}]
	{In contrast to the versions of d-CG and d-ADMM from \cite{Engelmann2020c}, we  rely on} the sparsity results of \autoref{lem:propS}.
	{This approach eliminates any precomputation in d-CG,} and for d-ADMM this {often} gives strong convexity of the  local objectives $f_i$ in \eqref{eq:consLam2}, which in turn implies linear instead of sublinear convergence, cf. \cite{Yang2016,Wei2013,Nedic2018a}.\hfill $\square$
\end{rem}

\section{Applications} \label{sec:App}

The introduction claimed that problem \eqref{eq:SchurComp} occurs in many applications.
{Indeed examples are}   distributed non-convex second-order algorithms such as distributed interior-point methods \cite{Zavala2008,Pakazad2017,Kardos2020c}, distributed SQP methods \cite{Kozma2014}, and ALADIN \cite{Houska2016}.
These methods have in common that a KKT system is solved centrally in each iteration, which is a substantial hurdle for   decentralization.
Under standard regularity and second-order conditions,  KKT systems can be written in form of \eqref{eq:SchurComp}. Thus they can be solved via d-ADMM or d-CG---this way obtaining decentralized variants of these algorithms. 
A  blueprint approach reads as follows:
\begin{enumerate}
	\item Reformulate an optimization problem in affinely-coupled separable form:\footnote{This is always possible using auxiliary variables \cite{Engelmann2020c}.}
	\begin{subequations} \label{eq:sepForm}
		\begin{align} 
			\min_{x_1,\dots,x_S} \; \sum_{i \in \mathcal{S}} &\,f_i(x_i) \\
			\text{subject to }\quad  g_i(x_i)&=0 \; \;|\; \; \kappa_i, & \forall i \in \mathcal{S}, \label{eq:sepProbGi} \\
			\sum_{i \in \mathcal{S}} A_ix_i &= b\; \;|\; \; \lambda.\label{eq:consConstr}
		\end{align}
	\end{subequations}
	\item At the current iterate $(x^k,\kappa^k,\lambda^k)$, compute a quadratic approximation of \eqref{eq:sepForm}  where $B_i^k$ and $G_i^k$ are Hessian and Jacobian approximations.
	This yields the KKT system
	\begin{align} \label{eq:saddleProb}
		\begin{pmatrix}
			H & A^\top \\
			A & D\phantom{^\top}
		\end{pmatrix}
		\begin{pmatrix}
			p \\ \lambda
		\end{pmatrix}
		=
		\begin{pmatrix}
			-h \\ \phantom{-}d
		\end{pmatrix},
	\end{align}
	where $p^\top =(x^\top,\kappa^\top )$, $A=(A_1,\dots,A_S)$, and
	{	\begin{align*}
			H = {\operatorname{blkdiag}} 
			\left \{
			\begin{pmatrix}
				B_i & G_i^\top  \\
				G_i & 0\phantom{_i^\top}
			\end{pmatrix}
			\right \}_{i \in \mathcal{S}}
			\hspace{-.5cm},
			~h  = \operatorname{vec}
			\left \{
			\begin{pmatrix}
				g_i \\0
			\end{pmatrix}
			\right\}_{i \in \mathcal{S}}.
		\end{align*}
	}
	\item Compute the Schur complement---i.e., solve the first row in \eqref{eq:saddleProb} for $p$ and insert it to the second---to obtain   \eqref{eq:SchurComp}.
\end{enumerate}
This  {blueprint} reformulation substantially reduces the dimension of the KKT system \eqref{eq:saddleProb}---thus it often improves run-time. Moreover, in contrast to applying decentralized algorithms directly to~\eqref{eq:saddleProb} \cite{Shin2020}, one obtains a  system with positive-definite coefficient matrix.

\begin{rem}[Requirements on \eqref{eq:saddleProb}]
	The above procedure requires that all $B_i$ are invertible. 
	This is  can be enforced by a slightly stronger version of the Second-Order Sufficient Condition for \eqref{eq:sepForm}, cf. \cite[Ass 1]{Engelmann2020c}, or via regularization. \hfill $\square$
\end{rem}

\subsection*{Example -- Decentralized sensor fusion} \label{sec:distEst}
We apply the above blueprint to a sensor fusion problem~\cite{Xiao2005,Rabbat2004a,Schizas2008}.
The goal is to  estimate the parameters $\theta \in \mathbb{R}^{n}$ collaboratively  by minimizing the variance of the estimate.
The measurements for each sensor $i \in \mathcal{S}$ are given by
$
y_i = M_i \theta + \mathsf{v}_i$,
with $\mathsf{v} \sim \mathcal{N}(0,1)$,  $y_i \in \mathbb{R}^{n_{y}}$, and $\theta \in \mathbb{R}^{n_\theta}$.
We assume that all $M_i$ have full row rank.
Thus, the maximum-likelihood estimate of $\theta$  is
\begin{align} \label{eq:MLE}
	\theta^\star = \underset{\theta}{\operatorname{argmin}} \sum_{i \in \mathcal{S}} \frac{1}{2}\|y_i - M_i \theta\|_2^2.
\end{align}
{To enforce a communication graph one commonly adds additional consensus constraints, each of which corresponds to one communication link between sensors \cite{Schizas2008}.}
{Consider a connected communication graph} $G=(\mathcal{S},\mathcal E)$.
Then, we reformulate \eqref{eq:MLE} as
\begin{subequations}\label{eq:consEst}
	\begin{align} 
		\theta^\star = \underset{\theta_1,\dots,\theta_S}{\operatorname{argmin}}\;\;& \sum_{i \in \mathcal{S}} \frac{1}{2}\|y_i - M_i \theta_i\|_2^2 \\
		\text{ s.t.}\quad & \theta_i = \theta_j \; \text{ for all } \; (i,j) \in \mathcal E. \label{eq:consCnstr}
	\end{align}
\end{subequations}
{corresponding to step 1) of the blueprint.}
Observe that \eqref{eq:consCnstr} is equivalent to
$\sum_{i \in \mathcal{S}} A_i \theta_i=0$ with  $A=I_G \otimes I $, where $I_G$ is the incidence matrix of  $G$. 
The Lagrangian of \eqref{eq:consEst} reads
\begin{align*}
	\mathcal L(x,\lambda)= \sum_{i \in \mathcal{S}} \frac{1}{2}\|y_i - M_i \theta_i\|_2^2 + \lambda^\top \sum_{i \in \mathcal{S}}A_i \theta_i.
\end{align*}
Thus, the first-order optimality conditions of \eqref{eq:consEst} are
\begin{align}
	M_i^\top (y_i-M_i \theta_i) + A_i^\top \lambda = 0, \; \text{ and } \;\;
	\sum_{i \in \mathcal{S}} A_i\theta_i = 0\label{eq:2ndRow}
\end{align}
{corresponding to step 2).\footnote{Note that for \eqref{eq:consEst} LICQ does not hold in general. However, Slater's condition is always satisfied and thus we can use the KKT conditions.  }}
Solving \eqref{eq:2ndRow} yields $\theta_i = (M_i^\top M_i)^{-1} (M_i^\top y_i + A_i^\top \lambda)$.
{Step 3) gives
	\begin{align} \label{eq:SchurNet}
		\left (\sum_{i \in \mathcal{S}} A_i(M_i^\top M_i)^{-1}  A_i^\top \right  )\lambda 
		=s, 
	\end{align}
	with $s \doteq -\sum_{i \in \mathcal{S}} A_i(M_i^\top M_i)^{-1} M_i^\top y_i$.
}
\begin{lem}[Verifying the assumptions of \autoref{prop:convDCG}]
	Suppose $M_i, i \in \mathcal{S}$ have full row rank. Then $s\in \operatorname{range}(S)$ in \eqref{eq:SchurNet}. \hfill $\square$
\end{lem}
\begin{proof}
	The definition of $s$ implies  $s\in \operatorname{range}(A)$.
	$S$ on the left-hand side of \eqref{eq:SchurNet}  involves multiplication with $A$. Hence, if $M^\top M$ has full rank, $\operatorname{range}(S)=\operatorname{range}(A)$.
\end{proof}

\subsection*{Numerical results}
We consider a network of $\mathcal S=\{1,\dots,10\}$ sensors with different communication topologies
\begin{figure}
	\centering
	\includegraphics[trim=10 40 10 10,clip,width=.8\linewidth]{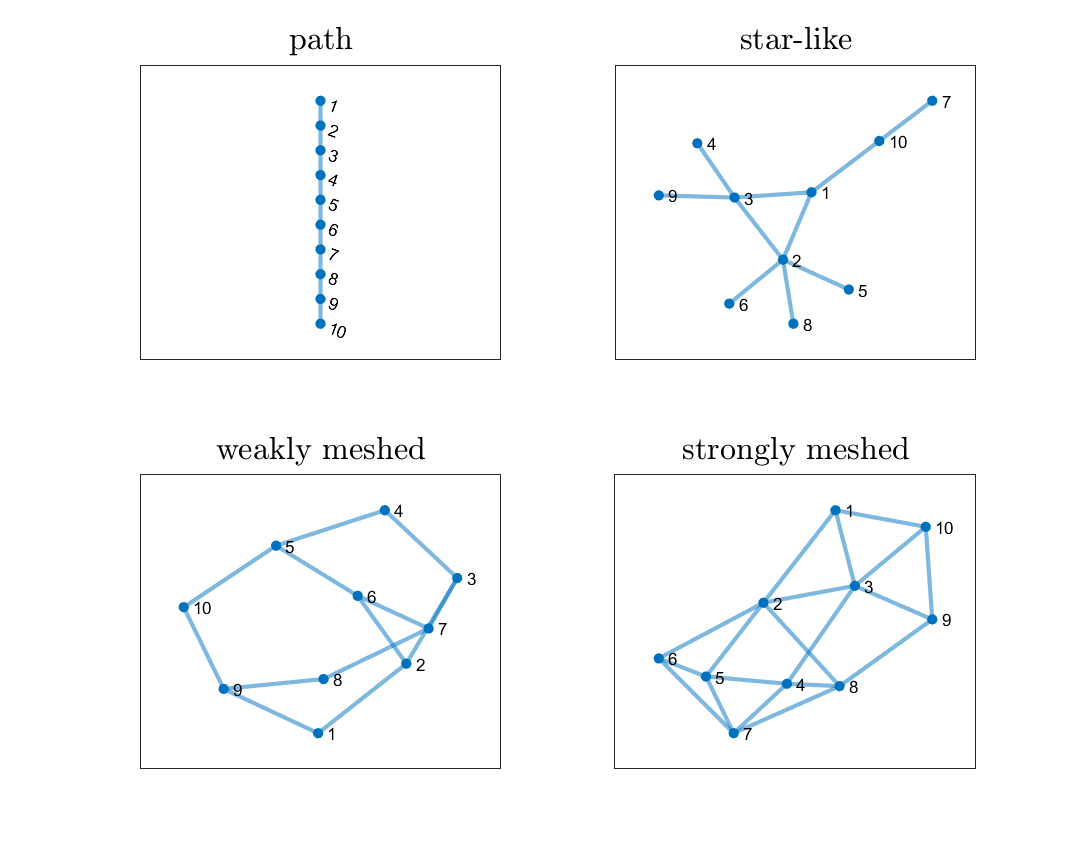}
	\caption{Investigated communication topologies.}
	\label{fig:graphs}
\end{figure}
such as a path graph, a star star-like graph, and two meshed graphs, cf.  \autoref{fig:graphs}.
The measurement matrix $M$ is randomly generated with  all matrix entries drawn from a uniform distribution on~$[0,1]$.
{We have $n_y=10$ measurements per sensor and $n_\theta = 10$ parameters.}
The measurement noise is Gaussian with $\mathsf{v} \sim \mathcal{N}(0,10^{-3})$.

The numerical performance of d-CG and d-ADMM is shown in \autoref{fig:sweep}.
\autoref{fig:rhoSwp} depicts the number of iterations required for reaching an accuracy of $\|S  \lambda -s\| < 10^{-5}$. 
As one can see, for all considered network topologies, d-CG is about 2-5 times faster than ADMM even in the best-tuned case.
This also holds true for a larger and randomly generated meshed graph with $\mathcal |S| =1000$ sensors and $\mathcal |\mathcal E| = 1500$ edges, where d-CG requires 29 iterations to converge to $\|S  \lambda -s\| < 10^{-4}$ and ADMM requires 86 iterations in the best-tuned case.
Moreover, one can observe that tuning of d-ADMM is quite difficult: the optimal tuning parameter $\rho$ varies with the network topology and significantly affects convergence speed.
Finally, \autoref{fig:conv} shows the distance to a minimizer measured in the weighted semi-norm $\|\lambda-\lambda^\star\|_{S}$ for the strongly meshed graph.
Observe that the convergence rate bound \eqref{eq:est2} is met.
Moreover, d-CG converges to a (numerically) exact solution after $69$ iterations, which is strictly less than  $n_o=90$  non-zero eigenvalues predicted by \autoref{thm:cgConv} and \autoref{prop:convDCG}.

\begin{figure}
	\centering
	\begin{subfigure}{\linewidth}
		\centering
		{\includegraphics[trim=0 0 0 10,width=0.55\linewidth]{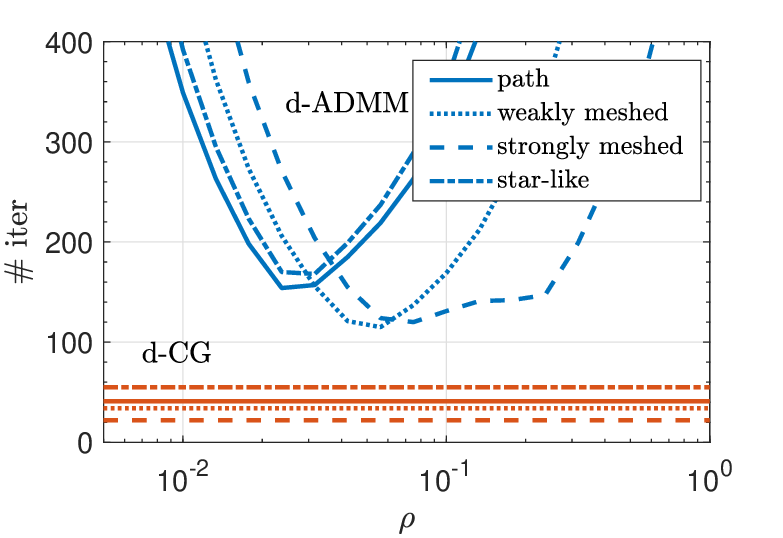}}
		\caption{ Required $\#$iterations  for reaching $\|S  \lambda -s\| < 10^{-5}$ for d-CG~(red) and d-ADMM (blue) and different communication topologies depending on the tuning parameter $\rho$.}
		\label{fig:rhoSwp}
	\end{subfigure}
	\begin{subfigure}{\linewidth}
		\centering
		\includegraphics[trim=0 0 0 10,clip,width=.55\linewidth]{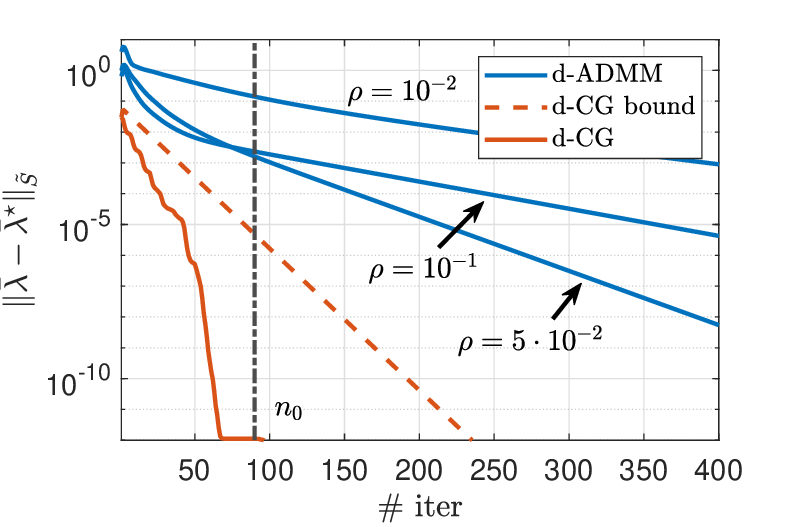}
		\caption{Convergence of d-CG and d-ADMM for the strongly meshed graph with convergence bound \eqref{eq:est2} for d-CG. }
		\label{fig:conv}
	\end{subfigure}
	\caption{Numerical performance of d-CG and d-ADMM.}
	\label{fig:sweep}
	\vspace{-0.74cm}
\end{figure}

\section{Summary}
This note presented an {essentially} decentralized version of the conjugate gradient algorithm for solving structured systems of linear equations over networks.
{The proposed algorithm achieves exact convergence in a finite number of steps, and the number of steps is a priorily computable. In examples, it has shown practically superlinear convergence.}
Future work will investigate the design  of communication topologies to optimize convergence rate.

\appendix
\section*{Proof of \autoref{thm:cgConv}} \label{sec:proof}
We analyze the CG iterates \eqref{eq:CGalpha}-\eqref{eq:CGp} in the kernel and in the range of $S$ separately.
To this end, denote vectors in the kernel of $S$ by subscript $(\cdot)_N$ and vectors in the range space of $S$ by subscript  $(\cdot)_R$. 
Since $\operatorname{null}(S)$ is the orthogonal complement of $\operatorname{range}(S^\top)$, and $\operatorname{range}(S^\top)=\operatorname{range}(S)$ since $S=S^\top$, we split the iterates of CG into
$x=x_N+x_R$.
\\
\textit{Iterates in }null{($S$)}:
Consider  the initialization in Algorithm~\ref{alg:CG}.
As $ s\in \operatorname{range}(S)$---i.e.. $S  \lambda_N^0=s_N=0$---we have $r^{0}_N=p^{0}_N=0$.
As $\operatorname{range}(S)\supset \{0\}$, $S\succeq 0$, and  $r^n\neq 0$ due to the termination criterion,  for all iterates $n \in \mathbb{N}$, the scalars $\alpha^n,\beta^n$ are positive numbers and  depend only on  $p_R$ and $r_R$.
Hence, by \eqref{eq:CGlam}, \eqref{eq:CGr}, and \eqref{eq:CGp}, $r^{m}_N=p^{m}_N=0$ and $\lambda^{m}_N = \lambda^{0}_N$ for  $m\geq n$.
\\
\textit{Iterates in} {range($S$)}:
Since $r^{n}_N=p^{n}_N=0$, the iterates \eqref{eq:CGalpha}-\eqref{eq:CGp} are not affected by their kernel components (except for \eqref{eq:CGlam}) since only zeros are added.
Let $Q\in \mathbb{R}^{m\times m-n_0}$ be a matrix {whose} columns are an orthonormal basis of $\operatorname{range}(S)$.
Thus, one can uniquely represent all components of $ \lambda, r$ and $p$ in the range of $S$ in terms of $Q$, i.e., $( \lambda_R, r_R, p_R)=( Q\,{ \underline\lambda}_R,Q \,\underline r_R, Q \, \underline p_R)$.
Express the CG iterations \eqref{eq:CGiter} in the $\underline\cdot$ variables, left-multiplying \eqref{eq:CGlam}, \eqref{eq:CGr} and \eqref{eq:CGp} with $Q^\top $; use $Q^{-1}=Q^\top $,  $r^{n}_N=p^{n}_N=0$,  to get
\begin{align*}
	\alpha^n &= \frac{\underline r^{n\top}_R \underline r^{n}_R}{\underline  p^{n\top}_R  \underline { S }\,\underline p^{n}_R},  & \underline{ \lambda}^{n+1}_R &= \underline { \lambda}^{n}_R + \alpha^n \underline p^{n}_R, 
	\\	\underline r^{n+1}_R &= \underline r^{n}_R-\alpha^n \underline{S} \underline p^{n}_R, 
	&	\beta^n &= \frac{\underline r^{n+1 \top}_R\underline r^{n+1}_R}{\underline r^{n\top}_R \underline r^{n}_R},  \\
	\underline p^{n+1}_R &= \underline r^{n+1}_R + \beta^n \underline p^{n}_R,
\end{align*}
where $ \underline {S } \doteq Q^\top S Q $.
The same applies to the initial condition in Algorithm~\ref{alg:CG} and gives $\underline r^0_R= \underline p^0_R=\underline  {s} - \underline{S}  \, \underline { \lambda}^0$ with $\underline  {s}\doteq Q^\top s$.
By construction,  the matrices $\underline {S }$ and ${S }$ have the same non-zero eigenvalues.
Hence,  $\underline {S } \succ 0$ and thus the standard CG convergence theory  applies, which leads to the bound \eqref{eq:est2}, cf. \cite[Eq 5.36]{Nocedal2006}. 
Moreover, \cite{Greenbaum1979} has shown exact convergence after $m-n_0$ iterations  for solving $  \underline {S }\,  \underline{ \lambda }_R = \underline {s }$. 
Thus, $\underline{ \lambda}_R$ converges to the solution  $  \underline {S }\,  \underline{ \lambda }^{\star}_R = \underline {s }$.
Hence, the construction of $\underline {S }$ and $\underline {s }$ implies   $Q^\top S  \lambda_R = Q^\top s$.
Since $ \lambda ^{n}_N =  \lambda ^{0}_N$,  CG  converges to ${\lambda}^\star  = Q \underline{ \lambda}^{\star}_R +  \lambda^{0}_N$, which satisfies~\eqref{eq:SchurComp}.

\section*{Decentralized ADMM} \label{sec:ADMM}

For the sake of comparison, we 
reformulate problem~\eqref{eq:SchurComp} as the optimality conditions of a consensus problem to apply a decentralized variant of ADMM.
First we eliminate the zero rows/columns in $S_i$ to obtain a partially separable \emph{strongly} convex optimization problem using the sparsity framework from \autoref{sec:d-CG}. This ensures fast (linear) convergence for ADMM.
A lack of strong convexity might lead to a sublinear  convergence  \cite{Deng2016,Shi2014}. This is the case, if there are redundant consensus constraints---such as in \autoref{sec:App} except for the path graph.
Via  \autoref{lem:propS},  and  $AB=BA$ for diagonal matrices, the left-hand-side of \eqref{eq:SchurComp} is written as
\begin{align*} 
	&\left  (\sum_{i \in \mathcal{S}} S_i \right  ) \lambda 
	=\Bigg (\hspace{-.5mm}\sum_{i \in \mathcal{S}} I_i^{\top}  I_i^{\phantom \top}  \, S_i\, I_i^{\top}I_i^{\phantom \top} \hspace{-1mm} \Bigg ) \lambda \\
	&\qquad =\sum_{i \in \mathcal{S}}I_i^{\top}  \left ( I_i^{\phantom \top}   S_i I_i^{\top}  \right )I_i^{\phantom \top} \lambda 
	=\sum_{i \in \mathcal{S}}I_i^{\top} \, \hat S_i\, I_i^{\phantom \top} \lambda.
\end{align*}
Similarly, the right-hand-side of \eqref{eq:SchurComp} gives
$
\sum_{i \in \mathcal{S}} s_i  =\sum_{i \in \mathcal{S}} I_i^{\top} I_i^{\phantom \top} s_i   
=\hspace{-1mm}\sum_{i \in \mathcal{S}}  I_i^{\top} \hat s_i.
$
Summing up, we write \eqref{eq:SchurComp} as 
$
\sum_{i \in \mathcal{S}}I_i^{\top}     \hat    S_i  I_i \lambda
=
\sum_{i \in \mathcal{S}}  I_i^{\top}\; \hat s_i   .
$
Since $\hat S_i = \hat S_i^\top\succeq 0$, this corresponds to the optimality conditions of 
$
\min_{\lambda} \sum_{i\in \mathcal{S}} \frac{1}{2} \lambda^\top I_i^\top \; \hat S_i\;  I_i\lambda - I_i\, \hat s_i^{\top } \lambda .
$
Equivalently, we have
\begin{align} \label{eq:consLam2}
	\begin{aligned}
		\min_{\lambda_1,\dots,\lambda_S, \bar   \lambda} \quad&\sum_{i\in \mathcal{S}} \frac{1}{2} \lambda_i^\top \hat S_i \lambda_i - \hat s_i^{\top } \lambda_i \\
		\;\text{subject to } \quad& \lambda_i= I_i  \bar  \lambda\;\; | \;\; \gamma_i, \text{ for all } i \in \mathcal{S}.
	\end{aligned}
\end{align}
with new local multipliers $\lambda_1,\dots,\lambda_S$.
Decentralized ADMM follows standard patterns \cite{Bertsekas1989,Boyd2011}, see Algortihm~\ref{alg:D-ADM}, 
{where $\bar \lambda _i \doteq I_i  \bar  \lambda$.}

\begin{algorithm} [t]
	\caption{Decentralized ADMM for problem \eqref{eq:SchurComp}}
	\small
	\textbf{Initialization: $ \lambda_i^0$, $\gamma_i^0$ for all $i \in \mathcal{S}$, $\rho$}\\
	\textbf{For all $i \in \mathcal{S}$, repeat until $\|\lambda_i^n -  \bar  \lambda^n_i \| < \epsilon_p$ and  $\| \bar \lambda_i^{n+1} -  \bar \lambda_i^n  \| < \epsilon_d$:}
	\vspace{-.4cm}
	\begin{subequations}
		\begin{align}
			&\textstyle {\lambda_i^{n+1}=\left (\hat S_i + \rho I \right)^{-1}\left ( \hat s_i - \gamma_i^n + \rho \bar   \lambda^n_i \right )} & \text{(local)} \\
			&\textstyle { \bar \lambda^{n+1}_i = \left (\Lambda_i \right )^{-1} \sum_{j \in \mathcal  N_i} I_{ij} \lambda_j^{n+1} } 
			&\hspace{-.6cm} \text{(neighbor-neighbor)} \label{step:ADMavg} \\
			&\gamma_i^{n+1} = \gamma_i^n + \rho(\lambda^{n+1}_i -   \bar  \lambda_i^{n+1}) & \text{(local)} \\
			& n \leftarrow n+1 & \notag
		\end{align}
	\end{subequations} \label{alg:D-ADM} 
	\vspace{-.4cm}
\end{algorithm}

	\renewcommand*{\bibfont}{\footnotesize}
	{\footnotesize
	\printbibliography}
	
\end{document}